\documentclass[11pt]{article}      
\usepackage{amsmath,amssymb,amscd,amsthm, graphicx, verbatim, setspace} 
\usepackage{authblk}

\theoremstyle{plain}
\newtheorem{thm}{Theorem}[section]
\newtheorem{lem}[thm]{Lemma}
\newtheorem{cor}[thm]{Corollary}

\newcommand{\Ex}{\operatorname{Ex}}
\newcommand{\F}{\operatorname{F}}

\oddsidemargin -0.2in		
\textwidth 6.9in		

\topmargin -0.8in		
\textheight 9.5in 		

\title{Constructing sparse Davenport-Schinzel sequences}
\date{}
\author{Jesse Geneson\\
\small\tt ISU\\
\small\tt geneson@gmail.com
}
\begin{document}
\maketitle
\begin{abstract}
For any sequence $u$, the extremal function $\Ex(u, j, n)$ is the maximum possible length of a $j$-sparse sequence with $n$ distinct letters that avoids $u$. We prove that if $u$ is an alternating sequence $a b a b \dots$ of length $s$, then $\Ex(u, j, n) = \Theta(s n^{2})$ for all $j \geq 2$ and $s \geq n$, answering a question of Wellman and Pettie [Lower Bounds on Davenport-Schinzel Sequences via Rectangular Zarankiewicz Matrices, Disc. Math. 341 (2018), 1987--1993] and extending the result of Roselle and Stanton that $\Ex(u, 2, n) = \Theta(s n^2)$ for any alternation $u$ of length $s \geq n$  [Some properties of Davenport-Schinzel sequences, Acta Arithmetica 17 (1971), 355--362]. 

Wellman and Pettie also asked how large must $s(n)$ be for there to exist $n$-block $DS(n, s(n))$ sequences of length $\Omega(n^{2-o(1)})$. We answer this question by showing that the maximum possible length of an $n$-block $DS(n, s(n))$ sequence is $\Omega(n^{2-o(1)})$ if and only if $s(n) = \Omega(n^{1-o(1)})$. We also show related results for extremal functions of forbidden 0-1 matrices with any constant number of rows and extremal functions of forbidden sequences with any constant number of distinct letters. 
\end{abstract}

\section{Introduction}
A Davenport-Schinzel sequence of order $s$ is a sequence with no adjacent same letters that avoids alternations of length $s+2$ \cite{DS}. These sequences have a variety of applications and connections to other problems, including upper bounds on the maximum complexity of lower envelopes of sets of polynomials of bounded degree \cite{DS}, the maximum complexity of faces in arrangements of arcs \cite{agsh}, the maximum number of edges in certain $k$-quasiplanar graphs \cite{fox, gpt}, and extremal functions of tuples stabbing interval chains \cite{gen_tuple}. A $DS(n, s)$-sequence is a Davenport-Schinzel sequence of order $s$ with $n$ distinct letters. The function $\lambda_{s}(n)$ is defined as the maximum possible length of a $DS(n, s)$-sequence. Most research on Davenport-Schinzel sequences has focused on when $s$ is fixed. It is known that $\lambda_{1}(n) = n$, $\lambda_{2}(n) = 2n-1$, $\lambda_{3}(n) = 2n \alpha(n) + O(n)$, $\lambda_{4}(n) = \Theta(n 2^{\alpha(n)})$, $\lambda_{5}(n) = \Theta(n \alpha(n) 2^{\alpha(n)})$, and $\lambda_{s}(n) = n 2^{(1 \pm o(1))\alpha^{t}(n) / t!}$ for all $s \geq 6$, where $t = \lfloor \frac{s-2}{2} \rfloor$ \cite{DS,agshsh,niv,pettie}.

A more general upper bound from Davenport and Schinzel \cite{DS, klazar} shows that $\lambda_{s}(n) \leq s \binom{n}{2}+1$, even when $s$ is not fixed. Roselle and Stanton \cite{rs} constructed a family of sequences to prove that if $s \geq n$, then $\lambda_{s}(n) = \Theta(s n^{2})$. For the case of $s = n$, the coefficient of $n^3$ in their lower bound is $1/3$, and it is an open problem \cite{petwel} to determine what is the actual coefficient between $1/3$ and $1/2$. Wellman and Pettie \cite{petwel} proved several bounds for when $s$ is not fixed but sublinear in $n$, including that if $s = \Omega(n^{1/t} (t-1)!)$, then $\lambda_{s}(n)$ is between $\Omega(n^{2} s / (t-1)!)$ and $O(n^{2} s)$. Call a sequence $r$-sparse if every contiguous subsequence of length $r$ has all letters distinct. Let $\lambda_{s}(n, r)$ be the maximum possible length of an $r$-sparse $DS(n, s)$-sequence. Klazar proved for fixed $s, r, t$ that $\lambda_{s}(n, r) = \Theta(\lambda_{s}(n, t))$ for all $t \geq r \geq 2$ \cite{klazar1}, but the proof does not work when $s$ is not fixed. Wellman and Pettie \cite{petwel} asked whether Roselle and Stanton's $\Omega(s n^2)$ bound can be generalized to $r$-sparse $DS(n, s)$-sequences.

In this paper, we answer the question by constructing $r$-sparse $DS(n, s)$-sequences of length $\Omega(s n^{2})$ for $s \geq n$, where the constant in the bound depends on $r$. As a corollary, we obtain that if $s = \Omega(n^{1/t} (t-1)!)$, then there are $r$-sparse $DS(n, s)$-sequences of length $\Omega(n^{2} s / (t-1)!)$ for all $r \geq 2$. Our construction of $r$-sparse $DS(n, s)$-sequences is a specific case of a more general construction that we define for $(r, s)$-formations. An $(r, s)$-formation is a concatenation of $s$ permutations of $r$ distinct letters. The function $\F_{r, s}(n)$ is defined as the maximum possible length of an $r$-sparse sequence with $n$ distinct letters that avoids all $(r, s)$-formations. Similarly we define the function $\F_{r, s, j}(n)$ to be the maximum possible length of a $j$-sparse sequence with $n$ distinct letters that avoids all $(r, s)$-formations. Nivasch \cite{niv} and Pettie \cite{pettie3} found tight bounds on $\F_{r, s}(n)$ for all fixed $r, s > 0$, which are mostly on the same order as the bounds for $\lambda_{s-1}(n)$. Upper bounds on $(r, s)$-formations have been used to find tight bounds on the extremal functions of several families of forbidden sequences \cite{klazar1, gtseq}, including a family of sequences used to bound the maximum number of edges in $k$-quasiplanar graphs in which every pair of edges intersect at most a constant number of times \cite{gpt, fox}.

In Section \ref{longrsf}, we prove that $\F_{r, s, j}(n) = \Theta(s n^r)$ for all $s \geq n^{r-1}$ and $j \geq r$, where the constant in the bound depends on $r$ and $j$. Our answer to the sparsity question from \cite{petwel} follows as a corollary of this result. In Section \ref{bifur}, we answer another question from the same paper \cite{petwel} about a related sequence extremal function. We use the word \emph{block} to refer to a contiguous subsequence of distinct letters. Let $\lambda_{s}(n; m)$ denote the maximum possible length of a $DS(n, s)$ sequence that can be partitioned into $m$ blocks. Wellman and Pettie asked how large must $s$ be for $\lambda_{s}(n;n) = \Omega(n^{2-o(1)})$ \cite{petwel}. We show that $\lambda_{s}(n;n) = \Omega(n^{2-o(1)})$ if and only if $s(n) = \Omega(n^{1-o(1)})$. In addition, we prove related results for extremal functions of forbidden 0-1 matrices with a bounded number of rows and forbidden sequences with a bounded number of distinct letters.

\section{Forbidden $(r, s)$-formations and hypergraph edge coloring}\label{longrsf}

Klazar proved that $\F_{r, s}(n) \leq s n^{r}$ for all $n \geq r$ \cite{klazar, niv}. In the next theorem, we show for $s$ sufficiently large that this bound is tight up to a factor that depends only on $r$ (and not on $s$ or $n$). In order to answer the sparsity question of Wellman and Pettie, we will only need the following results for $r = 2$, but we prove these results for all $r$ since they are of independent interest.

\begin{thm}\label{fbase}
Fix integer $r \geq 2$ and real number $0 < c \leq 1$. Then $\F_{r, s, r}(n) = \Theta(s n^r)$ for $s \geq c n^{r-1}$, where the constant in the lower bound depends on $c$ and $r$.
\end{thm}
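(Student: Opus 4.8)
The plan is to observe that the upper bound is free and to build a matching construction for the lower bound. Since ``$r$-sparse'' means the same thing in the definitions of $\F_{r,s,r}$ and $\F_{r,s}$, we have $\F_{r,s,r}(n)=\F_{r,s}(n)\le sn^{r}$ from Klazar's bound quoted above, so it suffices to prove $\F_{r,s,r}(n)=\Omega(sn^{r})$: I must exhibit, for every admissible pair $(n,s)$, an $r$-sparse sequence on $n$ letters of length $\Omega(sn^{r})$ avoiding all $(r,s)$-formations, where the hidden constant is permitted to depend on $c$ and $r$.

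The construction rests on a $1$-factorization of the complete $r$-uniform hypergraph. After replacing $n$ by the largest multiple of $r$ that is at most $n$ (which costs only a constant factor and merely leaves a few letters unused), Baranyai's theorem partitions $\binom{[n]}{r}$ into $M=\binom{n-1}{r-1}=\Theta_{r}(n^{r-1})$ perfect matchings $\mathcal F_{1},\dots,\mathcal F_{M}$, each splitting $[n]$ into $n/r$ blocks of size $r$. Fix parameters $M'\le M$ and $m\ge 1$. For a block $F$ write $g(F)=\pi_{F}^{\,m}$, a fixed permutation of $F$ repeated $m$ times; let $D_{e}$ be the concatenation of the gadgets $g(F)$, $F\in\mathcal F_{e}$, in an arbitrary order; and set $S=D_{1}\,\gamma_{1}\,D_{2}\,\gamma_{2}\cdots\gamma_{M'-1}\,D_{M'}$, where each separator $\gamma_{e}$ is a permutation of $[n]$. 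Then $|D_{e}|=nm$ and $|S|=\Theta(M'nm)$, the separators being a lower-order term.

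Two properties must be checked. For $r$-sparsity: any length-$r$ window inside one gadget is a cyclic rotation of $\pi_{F}$; a window straddling two gadgets of the same $D_{e}$ sees two disjoint blocks; and the only awkward junctions, between $D_{e}$ and $D_{e+1}$, are repaired by choosing $\gamma_{e}$ to start with $r-1$ letters avoiding the last block of $D_{e}$ and to end with $r-1$ letters avoiding the first block of $D_{e+1}$, which is possible once $n$ exceeds a constant depending on $r$ (finitely many small $n$ are dispatched by a suitable power of a single permutation of $[n]$). For formation avoidance, the key claim is that for every $r$-set $R$ the largest number of pairwise disjoint intervals of $S$ each containing all of $R$ is at most $m+O(M')$: the set $R$ is a block of at most one $\mathcal F_{e}$, and in that $D_{e}$ the restriction to $R$ is exactly $\pi_{R}^{\,m}$, contributing $m$; in every other $D_{e'}$ the letters of $R$ lie in at least two of the pairwise disjoint gadgets, and a short wrap-around argument shows $D_{e'}$ then contains at most one interval covering $R$ (a second disjoint one would have to reach back into a gadget lying wholly to its left); and the $M'-1$ separators together with the gadget boundaries account for $O(M')$ more. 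Hence $S$ avoids all $(r,s)$-formations as soon as $m+O(M')<s$.

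It remains to choose $M'$ and $m$. If $s$ is at least a suitable constant times $M=\Theta_{r}(n^{r-1})$, take $M'=M$ and $m=\Theta(s)$, so $|S|=\Theta(Mnm)=\Theta(n^{r-1}\cdot n\cdot s)=\Theta(sn^{r})$. Otherwise $cn^{r-1}\le s=O_{r}(n^{r-1})$, hence $s=\Theta_{c,r}(n^{r-1})$; take $M'=\Theta(s)\le M$ and $m=\Theta(s)$ with $m+O(M')<s$, so $|S|=\Theta(M'nm)=\Theta(s^{2}n)=\Theta(sn^{r})$, where the constant absorbs the dependence on $c$. In both cases $\F_{r,s,r}(n)=\Omega(sn^{r})$ with a constant depending only on $c$ and $r$. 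The one genuinely load-bearing point is the ``at most one covering interval per foreign matching'' claim together with the precise accounting of the boundary and separator intervals; the $r$-sparsity repairs and the reduction to $r\mid n$ are routine.
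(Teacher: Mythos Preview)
Your argument is correct, and the parameter split into the two regimes $s\gg M$ and $s=\Theta_{c,r}(n^{r-1})$ is handled properly. The ``one covering interval per foreign matching'' claim is sound: if $R$ is not a block of $\mathcal F_{e'}$, some letter of $R$ lies only in the leftmost gadget of $D_{e'}$ that meets $R$, so a second disjoint $R$-covering interval would need that letter again to its right, which is impossible; combined with the standard bound $f_S(R)\le \sum_i f_{P_i}(R)+(\#\text{pieces}-1)$, this yields the stated $m+O(M')$.

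The paper takes a more elementary and rather different route. Instead of invoking Baranyai's theorem and gluing matching-layers with separator permutations, it lists all $r$-subsets of $[x]$ in lexicographic order and repeats each one $t$ times, obtaining a single sequence $T_r(x,t)$ of length $rt\binom{x}{r}$. The $r$-sparsity is proved by a short monotonicity argument on the \emph{position} of a repeated letter in consecutive troops (no separators are needed), and the formation bound comes from grouping the troops into ``troop-rows'' $B_{a_1,\dots,a_{r-1}}$: each troop-row other than $B_{a_1,\dots,a_{r-1}}$ for the chosen $r$-tuple contributes at most~$2$, and that one special troop-row contributes at most $t+1$, giving a uniform bound $2\binom{x-1}{r-1}+t+1$. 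A single parameter choice $x=cn/4$, $t=s/2-1$ then works for all $s\ge cn^{r-1}$, with no case split. Your approach is perfectly valid and conceptually clean (each matching is a layer), but it imports Baranyai's theorem and needs the separator/boundary bookkeeping; the paper's lexicographic construction is self-contained and, importantly, its troop structure is reused verbatim as the base case of the inductive $q$-sparse construction in the next theorem.
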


\begin{proof}
It suffices to prove that $\F_{r, s, r}(n) = \Omega(s n^{r})$ for $s \geq c n^{r-1}$, where the constant in the bound depends on $c$ and $r$. Let $x, t > 0$ be two parameters that will be chosen at the end of the proof in terms of $n$, $s$, $c$, and $r$. Define $T_{r}(x, t)$ to be the sequence obtained by starting with the empty sequence and then addending $t$ copies of the subsequence $i_{1} i_{2} \dots i_{r}$ for each $1 \leq i_1 < i_2 < \dots < i_{r-1} <  i_{r} \leq x$, for $(i_{1}, i_{2}, \dots, i_{r})$ in lexicographic order. We call the consecutive copies of the subsequence $i_{1} i_{2} \dots i_{r}$ in $T_{r}(x, t)$ a \emph{troop}, and we say that the \emph{position} of $i_{j}$ in this troop is $j$ for each $1 \leq j \leq r$. We call the set of adjacent troops with $i_{1} = a_{1}, i_{2} = a_{2}, \dots, i_{r-1} = a_{r-1}$ the \emph{troop-row} $B_{a_{1}, a_{2}, \dots, a_{r-1}}$. Note that each troop-row contains fewer than $x$ troops, and there are a total of $\binom{x-1}{r-1}$ troop-rows.

First observe that $T_{r}(x, t)$ has length $r t \binom{x}{r}$, and that $T_{r}(x, t)$ is $r$-sparse. The first fact is true by definition. To see that $T_{r}(x, t)$ is $r$-sparse, suppose that some letter $q$ occurs in two adjacent troops $L$ and $R$, with $L$ preceding $R$. If every letter greater than $q$ occurs in $L$, then the position of $q$ in $R$ must be at least the position of $q$ in $L$, since the letters greater than $q$ are the only letters that can occur after $q$ in $R$. Otherwise if some letter $p > q$ does not occur in $L$, then the troop obtained from $L$ by replacing $q$ with $p$ would be after $L$ but before any troop with $q$ in a lesser position. Thus the position of $q$ in $R$ must be at least the position of $q$ in $L$.

Next we explain why the formations on $r$ letters have length less than $2\binom{x-1}{r-1}+t+1$. Let $a_{1} < a_{2} < \dots < a_{r}$ be arbitrary distinct letters in $T_{r}(x, t)$. Note that we can find a longest formation on the letters $a_{1}, a_{2}, \dots, a_{r}$ by searching greedily from left to right in $T_{r}(x, t)$. Suppose that we go through the troop-rows from beginning to end, and we mark troop-rows greedily on the letters wherever the formation length increases by $1$ (in other words, we mark the last letter of each permutation of the formation that we find greedily). Then every troop-row not equal to $B_{a_{1}, a_{2}, \dots, a_{r-1}}$ increases the length of the formation on letters $a_{1}, a_{2}, \dots, a_{r}$ by at most $2$. Troop-row $B_{a_{1}, a_{2}, \dots, a_{r-1}}$ increases the length of the formation on letters $a_{1}, a_{2}, \dots, a_{r}$ by at most $t+1$, so all $r$-tuples of letters in $T_{r}(x, t)$ have formation length less than $2\binom{x-1}{r-1}+t+1$. Since $T_{r}(x, t)$ is an $r$-sparse sequence with $x$ distinct letters and length $r t \binom{x}{r}$ that avoids all $(r, 2\binom{x-1}{r-1}+t+1)$-formations, choosing e.g. $x = \frac{c n}{4}$ and $t = s/2-1$ suffices to give the bound of $\F_{r, s, r}(n) = \Omega(s n^{r})$ for all $s \geq c n^{r-1}$, where the constant in the bound depends on $c$ and $r$.
\end{proof}

The lemma below generalizes the first part of Chang and Lawler's argument for their upper bound of $\lceil{ 3n/2 - 2 \rceil}$ on proper edge-coloring for linear hypergraphs \cite{chlaw}. We use this lemma to make sequences of increasing sparsity in the main theorem for $(r, s)$-formations. In the case $r = 2$, Kahn's theorem \cite{kahn} can be used to obtain a better constant factor for the lower bound in our main theorem.

\begin{lem}\label{chlawext}
Suppose that $H$ is a $k$-uniform hypergraph in which every pair of edges have intersection size at most $y$ for some $1 \leq y < k$. Then it is possible to color the edges of $H$ with $k^{y} n / y!$ colors so that no pair of edges with intersection size $y$ receive the same color. 
\end{lem}

\begin{proof}
We color the edges of $H$ in an arbitrary order. Assume that we next color an edge $e$. Since every pair of edges in $H$ have intersection size at most $y$, there are at most $\lfloor \frac{n-k}{k-y} \rfloor$ edges already assigned colors that meet $e$ at each of the $\binom{k}{y}$ size-$y$ subsets of vertices that are contained in $e$. Thus there will be an unused color for $e$ if $\binom{k}{y} \frac{n-k}{k-y} < k^{y} n / y!$, which holds for all $1 \leq y < k$.
\end{proof}

The inductive construction for the theorem below uses Theorem \ref{fbase} for the base case and Lemma \ref{chlawext} for the inductive step. 

\begin{thm}
Fix integers $q \geq r \geq 2$ and real number $0 < c \leq 1$. Then $\F_{r, s, q}(n) = \Theta(s n^r)$ for $s \geq c n^{r-1}$, where the constant in the lower bound depends on $c$, $r$, and $q$.
\end{thm}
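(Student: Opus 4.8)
The plan is: first dispose of the upper bound, which is immediate because $q\ge r$ forces every $q$-sparse sequence to be $r$-sparse, so that $\F_{r,s,q}(n)\le \F_{r,s}(n)\le sn^{r}$ by Klazar's bound; then prove the matching lower bound $\F_{r,s,q}(n)=\Omega(sn^{r})$ by induction on $q$, with Theorem \ref{fbase} supplying the base case $q=r$. For the inductive step I would start from a $(q-1)$-sparse sequence $S$ on $m=\lfloor\beta n\rfloor$ distinct letters that avoids every $(r,s)$-formation and has length $\Omega(sm^{r})$, obtained from the inductive hypothesis applied with a constant $c'=c'(c,r,q)$ small enough that $s\ge cn^{r-1}\ge c'm^{r-1}$; here $\beta=\beta(r,q)$ is chosen so that the relabeling below costs at most a factor $1/\beta$ in the number of letters. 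The target is to turn $S$ into a $q$-sparse sequence $S'$ on $O_{r,q}(m)$ distinct letters, of the same length, that still avoids every $(r,s)$-formation; rescaling $m=\Theta(n)$ and padding then finishes the induction.

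The relabeling itself is where Lemma \ref{chlawext} enters. The only way $S$ can fail to be $q$-sparse is at pairs of equal entries exactly $q-1$ apart, and each such pair is pinned down by a $(q-1)$-element window of consecutive letters; taking these windows (or an appropriate variant) as the edges of a $(q-1)$-uniform hypergraph $H$ on the letter set of $S$, one gets a hypergraph in which distinct edges meet in at most $q-2$ vertices, and separating the edges that meet in $q-2$ vertices is exactly what is needed to pry apart the bad pairs. Lemma \ref{chlawext} with $k=q-1$ and $y=q-2$ (note $1\le y<k$ in every step, since there $q\ge r+1\ge 3$) colours these edges with $O(m)$ colours so that edges meeting in $q-2$ vertices get distinct colours, and the plan is to use this colouring to split each letter of $S$ into a bounded number of ``versions,'' assigning versions to occurrences so that occurrences at distance $q-1$ always land in different versions — which makes $S'$ genuinely $q$-sparse.

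The step I expect to be the crux is doing this split while simultaneously (i) keeping $S'$ $q$-sparse, (ii) introducing only $O_{r,q}(1)$ versions per letter — the naive ``$(\mathrm{letter},\mathrm{colour})$'' refinement is $q$-sparse but uses $\Theta(m^{2})$ letters — and (iii) not creating any new formation, which is the genuinely hard constraint, because any refinement that merely cuts a letter into boundedly many pieces tends to interleave the pieces and so produce $(r,\omega(s))$-formations on two versions of a single letter. I would expect the resolution to require using the $O(m)$-colouring not as a direct letter relabeling but to reorganize $S$ — so that every maximal run in which a given letter repeats at distance $q-1$ is short — and therefore to lean on the special structure of the construction coming out of Theorem \ref{fbase} (its bad pairs sit inside short, highly regular repeated blocks) rather than treating $S$ as an arbitrary $(q-1)$-sparse sequence.
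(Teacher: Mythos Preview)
Your inductive skeleton is the right one, and you have put your finger exactly on the obstruction: if you try to manufacture $q$-sparsity by \emph{splitting} each old letter into several versions, then two versions of the same letter alternate wildly and you create long $(r,s')$-formations out of thin air. You do not resolve this; you only conjecture that one should ``reorganize $S$'' using the colouring and lean on the special block structure coming from Theorem~\ref{fbase}. That intuition is correct, but the actual mechanism is different from what you sketch, and simpler.

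The paper does not split old letters at all. It keeps the troop structure of $T_{r}(x,t)$ alive through the induction: $T_{r,q-1}(x,t)$ is still a concatenation of $\binom{x}{r}$ troops, each troop being $t$ repetitions of a fixed $(q-1)$-tuple of letters. The hypergraph $H$ has one edge per \emph{troop} (not per sliding window), and the inductive invariant is that any two troops share at most $r-1$ letters, so Lemma~\ref{chlawext} is applied with $y=r-1$, not $y=q-2$. The colouring $f$ is then used to \emph{insert} a brand-new letter: for each troop $e$, append the symbol $f(e)$ after every occurrence of the $(q-1)$-tuple of $e$, turning each troop into $t$ repetitions of a $q$-tuple. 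This simultaneously (i) makes the sequence $q$-sparse (the inserted letter separates the end of one copy from the start of the next), (ii) adds only $O_{r,q}(x)$ new letters (the number of colours), and (iii) keeps the formation bound: any new letter lives in at most one troop per troop-row, so an $r$-set containing a new letter picks up at most $2$ per troop-row, or $t+1$ from the unique troop that contains all $r$ of them --- the same $2\binom{x-1}{r-1}+t+1$ bound as before. Because nothing is split, the problem you flagged in (iii) never arises. The invariant that troops meet in at most $r-1$ letters persists because $f$ gives different colours to troops sharing $r-1$ old letters, so no two new troops can share $r$ letters.
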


\begin{proof}
The upper bound was already proved in \cite{klazar, niv}, so it suffices to prove that $\F_{r, s, q}(n) = \Omega(s n^r)$ for all $s \geq c n^{r-1}$, where the constant in the lower bound depends on $c$, $r$, and $q$. In Theorem \ref{fbase}, we proved that the theorem is true for $q = r$. For the initial case of our inductive construction ($q = r$), we set $T_{r, r}(x, t) = T_{r}(x, t)$, where $T_{r}(x, t)$ is the same sequence defined in Theorem \ref{fbase}. For every $q \geq r+1$, we will construct $T_{r, q}(x, t)$ so that $T_{r, q}(x, t)$ has length $q t \binom{x}{r}$ and any $r$-tuple of distinct letters $a_1, a_2, \dots, a_r$ in $T_{r, q}(x, t)$ has formation length less than $2\binom{x-1}{r-1}+t+1$, independently of $q$. In addition, $T_{r, q}(x, t)$ will be $q$-sparse but not $(q+1)$-sparse, and $T_{r, q}(x, t)$ will have at most $(q!)^{r-1} x$ distinct letters.

Like $T_{r, r}(x, t)$, the sequences $T_{r, q}(x, t)$ for $q \geq r+1$ also have troops, where each troop consists of a sequence of $q$ distinct letters repeated $t$ times. The number of troops is $\binom{x}{r}$, independently of $q$. In order to construct $T_{r, q}(x, t)$ from $T_{r, q-1}(x, t)$, we treat each troop in $T_{r, q-1}(x, t)$ as an edge in a $(q-1)$-uniform hypergraph. Define $H = (V, E)$ as the $(q-1)$-uniform hypergraph with vertex set equal to the letters of $T_{r, q-1}(x, t)$ and edge set $E$ with $e \in E$ if and only if there is a troop in $T_{r, q-1}(x, t)$ on the letters of $e$. Suppose for inductive hypothesis that $H$ is a hypergraph in which every pair of edges have intersection size at most $r-1$. Then by Lemma \ref{chlawext}, it is possible to color the edges of $H$ with some coloring $f$ using $(q-1)^{r-1} ((q-1)!)^{r-1}x  / (r-1)!$ colors so that no pair of edges with intersection size $r-1$ receive the same color. For each edge $e \in E$, insert the color $f(e)$ after each of the $t$ occurrences in $T_{r, q-1}(x, t)$ of the $q-1$ letters in $e$. The resulting sequence $T_{r, q}(x, t)$ is $q$-sparse but not $(q+1)$-sparse. It has length $q t \binom{x}{r}$ and at most $((q-1)!)^{r-1}x + (q-1)^{r-1} ((q-1)!)^{r-1}x  / (r-1)! \leq (q!)^{r-1} x$ distinct letters. 

For formations, we consider arbirary distinct letters $a_{1} < a_{2} < \dots < a_{r}$ in $T_{r, q}(x, t)$. Note that if all of the letters were also in $T_{r, q-1}(x, t)$, then they have formation length less than $2\binom{x-1}{r-1}+t+1$ by inductive hypothesis. If at least two of the letters are both new to $T_{r, q}(x, t)$, then the maximum possible formation length on $a_{1}, a_{2}, \dots, a_{r}$ is at most $2\binom{x-1}{r-1}$ since each new letter occurs in at most one troop in each troop-row. If there is a single letter $a_{i}$ that was not in $T_{r, q-1}(x, t)$, then there are two cases. If the letters $a_{1}, a_{2}, \dots, a_{r}$ do not all appear in a single troop together, then $a_{1}, a_{2}, \dots, a_{r}$ have formation length at most $2\binom{x-1}{r-1}$ since each new letter occurs in at most one troop in each troop-row. If all of the letters $a_{1}, a_{2}, \dots, a_{r}$ appear in a single troop together, then their maximum formation length is less than $2\binom{x-1}{r-1}+t+1$, since each troop-row not containing that troop contributes at most $2$ to the formation length. Note that the letters $a_{1}, a_{2}, \dots, a_{r}$ cannot all occur together in two troops by the definition of the coloring $f$. 

Let $H'$ be the $q$-uniform hypergraph with vertex set equal to the letters of $T_{r, q}(x, t)$ and edge set $E$ with $e \in E$ if and only if there is a troop in $T_{r, q}(x, t)$ on the letters $e$. $H$ is a hypergraph in which every pair of edges have intersection size at most $r-1$, so $H'$ is also a hypergraph in which every pair of edges have intersection size at most $r-1$ by the definition of the coloring $f$. This completes the induction. The last part of the proof is choosing $x$ and $t$ in terms of $n$ and $s$. Since $T_{r, q}(x, t)$ is a $q$-sparse sequence avoiding formations of length $2\binom{x-1}{r-1}+t+1$ with at most $(q!)^{r-1} x$ distinct letters and length $q t \binom{x}{r}$, choosing e.g. $x = \frac{c n}{4 \times (q!)^{r-1}}$ and $t = s/2-1$ suffices to give the bound of $\F_{r, s, q}(n) = \Omega(s n^r)$ for all $s \geq c n^{r-1}$, where the constant in the lower bound depends on $c$, $r$, and $q$. Note also that even if $T_{r, q}(x, t)$ has fewer than $n$ distinct letters, we can add a sequence of new distinct letters at the end of $T_{r, q}(x, t)$ to increase the number of distinct letters to $n$ without increasing the maximum formation length.
\end{proof}

For any sequence $u$, we define $\Ex(u, j, n)$ to be the maximum possible length of a $j$-sparse sequence with $n$ distinct letters that avoids $u$. The next corollary implies the answer to Wellman and Pettie's sparsity question since $(ab)^s$ is a $(2, s)$-formation.

\begin{cor}
If $u$ is an $(r, s)$-formation and $0 < c \leq 1$, then $\Ex(u, j, n) = \Theta(s n^{r})$ for all $j \geq r$ and $s \geq c n^{r-1}$, where the constants in the bounds depend on $c$, $r$, and $j$.
\end{cor}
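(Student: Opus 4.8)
The plan is to read off both bounds from the immediately preceding theorem on $\F_{r,s,j}(n)$, together with Klazar's upper bound $\F_{r,s'}(n)\le s' n^r$ quoted at the start of the section, handling the lower and upper bounds separately.

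For the lower bound there is essentially nothing to do. Any sequence that avoids \emph{every} $(r,s)$-formation in particular avoids the single $(r,s)$-formation $u$, so $\Ex(u,j,n)\ge \F_{r,s,j}(n)$; and for $j\ge r$ and $s\ge c n^{r-1}$ the preceding theorem gives $\F_{r,s,j}(n)=\Omega(s n^r)$ with a constant depending on $c$, $r$, and $j$.

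For the upper bound the key step is a standard ``absorption'' observation: every $(r, rs)$-formation contains $u$ as a (pattern) subsequence. Given an $(r,rs)$-formation $F=\sigma_1\sigma_2\cdots\sigma_{rs}$ on some $r$-element letter set, relabel the letters of $u$ to that set, partition the $rs$ permutations of $F$ into $s$ consecutive blocks $\sigma_{(\ell-1)r+1}\cdots\sigma_{\ell r}$ of $r$ permutations apiece, and in block $\ell$ read off the $\ell$-th permutation of the relabeled $u$ one letter at a time, taking its $m$-th letter from $\sigma_{(\ell-1)r+m}$. Since the blocks occur in order, this exhibits $u$ as a subsequence of $F$. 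Hence a sequence avoiding $u$ avoids all $(r,rs)$-formations, so $\Ex(u,j,n)\le \F_{r,rs,j}(n)$; and since a $j$-sparse sequence with $j\ge r$ is in particular $r$-sparse, $\F_{r,rs,j}(n)\le \F_{r,rs,r}(n)=\F_{r,rs}(n)\le rs\,n^r$. Thus $\Ex(u,j,n)=O(s n^r)$ with a constant depending only on $r$, and combining with the lower bound gives $\Ex(u,j,n)=\Theta(s n^r)$.

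There is no real obstacle here — the substance is all in the theorem already proved. The only point needing a little care is making the block partition and the relabeling in the absorption argument precise, so that the extracted copy of $u$ genuinely respects subsequence order; everything else is bookkeeping about which constants each bound depends on.
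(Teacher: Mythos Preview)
Your proof is correct and follows the same approach as the paper: the lower bound comes from the inclusion $\Ex(u,j,n)\ge \F_{r,s,j}(n)$ together with the preceding theorem, and the upper bound from the fact that every $(r,rs)$-formation contains $u$, combined with Klazar's bound. The paper simply asserts the absorption fact without justification, whereas you spell out the block-partition argument explicitly; this is a welcome addition but not a different route.
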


\begin{proof}
The upper bound follows since every $(r, r s)$-formation contains $u$, while the lower bound follows from the last theorem since any sequence that avoids all $(r, s)$-formations will also avoid any specific $(r, s)$-formation.
\end{proof}

\begin{thm}\label{mainsp}
Fix integer $j \geq 2$ and real number $0 < c \leq 1$. Then $\lambda_{s}(n, j) = \Theta(s n^2)$ for $s \geq c n$, where the constant in the bound depends on $c$ and $j$.
\end{thm}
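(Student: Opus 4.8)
The plan is to read off Theorem~\ref{mainsp} from the corollary above on forbidden $(r,s)$-formations, specialized to $r=2$. First the upper bound: since $j\ge 2$, a $j$-sparse sequence is $2$-sparse and hence has no two equal adjacent letters, so every $j$-sparse $DS(n,s)$-sequence is a Davenport--Schinzel sequence of order $s$ with $n$ distinct letters; thus $\lambda_{s}(n,j)\le\lambda_{s}(n)\le s\binom{n}{2}+1=O(sn^{2})$ by the Davenport--Schinzel/Klazar bound recalled in the introduction. So all the work is in the lower bound $\lambda_{s}(n,j)=\Omega(sn^{2})$.

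For the lower bound I would reduce ``being a $DS(n,s)$-sequence'' to ``avoiding a single $(2,s')$-formation.'' Set $s'=\lceil s/2\rceil$ and $u=(ab)^{s'}$, which is a $(2,s')$-formation. The key elementary observation is that if a sequence $S$ contains an alternation of length $s+2$ on some pair of its letters, then, after truncating that alternation to an even length and then to $2s'$ terms, $S$ already contains $u$ as a pattern, because $\lfloor(s+2)/2\rfloor\ge\lceil s/2\rceil=s'$ for every $s$. Contrapositively, any $j$-sparse sequence with $n$ distinct letters that avoids $u$ has no adjacent equal letters and no alternation of length $s+2$, i.e.\ it is a $j$-sparse $DS(n,s)$-sequence. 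Now $s\ge cn$ gives $s'\ge (c/2)n$ with $0<c/2\le 1$, so the corollary above (with the constant $c/2$ in place of $c$, with $r=2$, and with the given $j\ge 2$) yields a $j$-sparse sequence on $n$ distinct letters that avoids $u$ and has length $\Omega(s'n^{2})=\Omega(sn^{2})$, the implied constant depending only on $c$ and $j$. This sequence witnesses $\lambda_{s}(n,j)=\Omega(sn^{2})$, and combined with the upper bound this finishes the proof.

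I expect the only obstacle to be the routine bookkeeping of this reduction: translating the order-$s$ condition (no alternation of length $s+2$) into avoidance of the concrete formation $(ab)^{s'}$, checking the inequality $\lfloor(s+2)/2\rfloor\ge\lceil s/2\rceil$ so that avoiding $u$ really is at least as strong as the Davenport--Schinzel condition, and confirming that the hypothesis $s\ge cn$ survives halving so that the corollary's hypothesis $s'\ge c'n$ holds. There is no substantive new difficulty beyond what is already handled by the $(r,s)$-formation machinery.
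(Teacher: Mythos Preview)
Your proposal is correct and follows the same route as the paper: the paper's proof is the single line ``This follows from the last corollary with $r=2$,'' and you are simply unpacking that line, noting that an alternation of length $s+2$ contains the $(2,\lceil s/2\rceil)$-formation $(ab)^{\lceil s/2\rceil}$, so the lower bound on $\Ex((ab)^{\lceil s/2\rceil},j,n)$ transfers to $\lambda_s(n,j)$, while the upper bound is the classical $s\binom{n}{2}+1$. The only extra care you take is the parity bookkeeping (an alternation of odd length is not literally a $(2,s')$-formation) and the halving of $c$, both of which are implicit in the paper's one-line deduction.
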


\begin{proof}
This follows from the last corollary with $r = 2$.
\end{proof}

One of the constructions in Wellman and Pettie's paper \cite{petwel} is an inductive construction that uses Roselle and Stanton's construction as its initial case. The construction in Theorem \ref{mainsp} can be substituted for Roselle and Stanton's construction in Wellman and Pettie's proof to generalize the result in \cite{petwel}. 

\begin{cor}
If $s = \Omega(n^{1/t} (t-1)!)$, $\lambda_{s}(n, j)$ is between $\Omega(n^{2} s / (t-1)!)$ and $O(n^{2} s)$ for all $j \geq 2$.
\end{cor}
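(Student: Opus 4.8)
The upper bound needs nothing new: a $j$-sparse sequence with $j \ge 2$ has no two adjacent equal letters, so $\lambda_{s}(n,j) \le \lambda_{s}(n) \le s\binom{n}{2}+1 = O(n^{2}s)$ by the Davenport--Schinzel bound recalled in the introduction.

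For the lower bound, the plan is to leave Wellman and Pettie's inductive construction \cite{petwel} intact and replace only its base case. Recall that they obtain $\lambda_{s}(n) = \Omega(n^{2}s/(t-1)!)$ for $s = \Omega(n^{1/t}(t-1)!)$ by starting from a dense base family --- a $DS(m,s)$-sequence of length $\Omega(sm^{2})$ that exists whenever $s \ge m$ --- and applying a single combining step $t-1$ times, taking Roselle and Stanton's construction \cite{rs} as the base. I would instead start from the construction behind Theorem~\ref{mainsp}: with $m = \lfloor cs \rfloor$ it supplies, for \emph{every} $j \ge 2$, a $j$-sparse $DS(m,s)$-sequence of length $\Omega(sm^{2})$, i.e.\ exactly the dense base family that Wellman and Pettie need, but now $j$-sparse. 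Since their combining step treats the base family as a black box, only assembling relabeled copies of it, the whole task reduces to checking that this combining step preserves $j$-sparsity.

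To that end I would isolate two elementary closure facts. First, if $I$ is a $j$-sparse sequence of length at least $j$ and $O$ is any sequence with no two adjacent equal letters, then the substitution $O[I]$ --- obtained by replacing each occurrence of a letter $a$ of $O$ with a copy of $I$ on a private alphabet $I_{a}$ --- is $j$-sparse: inside one copy this is inherited from $I$; a window straddling two copies mixes letters from disjoint alphabets with the last/first few letters of two $j$-sparse blocks, hence has distinct entries; and two occurrences of the same letter lying in distinct copies of the same private alphabet $I_{a}$ are separated by at least one entire foreign copy of $I$ (because consecutive occurrences of $a$ in $O$ are separated by a different letter), hence by more than $j$ positions. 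Second, a concatenation of $j$-sparse sequences on pairwise disjoint alphabets is $j$-sparse. Running Wellman and Pettie's iteration on the $j$-sparse base and invoking these two facts at each step --- the substituted base blocks only grow longer, so the hypothesis ``length $\ge j$'' is never in danger --- keeps every intermediate sequence $j$-sparse while leaving its length, order, and number of distinct letters exactly as in \cite{petwel}. This yields $\lambda_{s}(n,j) = \Omega(n^{2}s/(t-1)!)$ and hence the corollary, with the constant depending on $j$ (and on the hidden constant of the hypothesis) through $m = \lfloor cs \rfloor$.

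The step I expect to require the most care is confirming that every operation in Wellman and Pettie's specific construction is genuinely of this ``sparsity-respecting'' type --- in particular that no step (for instance a matrix-to-sequence conversion, or a merge of row gadgets) places a letter back within a window of fewer than $j$ positions at a block boundary. If some step does, the remedy is to carry that step out by a sparsity-preserving variant, most naturally by routing the matrix structure through the Theorem~\ref{mainsp} gadget itself --- which is exactly a $j$-sparse ``matrix $\to$ sequence'' realization and is available for every sparsity parameter $\ge j$ --- rather than through a naive conversion; a blanket insertion of fresh spacer letters will not do, since the number of block boundaries in the sublinear-$s$ regime is far larger than $n$. One then has to check that the substitution used is compatible with Wellman and Pettie's alternation-avoidance analysis and that any unavoidable overhead is of lower order than both the target length $\Omega(n^{2}s/(t-1)!)$ and the target alphabet size $n$, so that none of the asymptotics in the statement are disturbed.
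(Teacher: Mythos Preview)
Your approach is essentially the paper's: replace the Roselle--Stanton base case in Wellman and Pettie's iteration by the $j$-sparse construction of Theorem~\ref{mainsp}, then argue that the combining step preserves $j$-sparsity. The paper handles the sparsity check more directly than you do: rather than setting up abstract closure facts for substitution and disjoint-alphabet concatenation and then worrying whether Wellman and Pettie's actual operations fit that mold, it simply observes that at the concatenation step the order of letters within each subsequence is free, so one can reorder to avoid any violation at block boundaries. This dissolves the concern you raise in your final paragraph without needing spacer letters or a sparsity-preserving matrix-to-sequence variant.
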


\begin{proof}
Wellman and Pettie proved the case $j = 2$ in Theorem 4.1 of their paper \cite{petwel}. The initial case of their construction $S_{1}(s, q)$ uses the Roselle-Stanton construction $RS(s, q)$ for $q$ a prime power and $s \geq q$. For $j \geq 3$, their construction and the analysis in their proof also work if we replace the Roselle-Stanton construction in their initial case with our construction in Theorem \ref{mainsp} using $c = 1$ for the bound $s \geq c n$. Note that in the part of their construction where the subsequences are concatenated, the order of letters in each subsequence can be chosen to preserve $j$-sparsity.
\end{proof}

\section{The threshold for $\lambda_{s}(n; n) = \Omega(n^{2-o(1)})$}\label{bifur}

In this section, we first show that $\lambda_{s}(n;n) = \Omega(n^{2-o(1)})$ if and only if $s(n) = \Omega(n^{1-o(1)})$. After that, we give some related results for sequences and 0-1 matrices.

If $s = \Omega(n^{1-o(1)})$, then any sequence that has $\min(s, n)$ blocks which contain every letter and $n-\min(s, n)$ empty blocks will avoid alternations of length $s+2$, as long as the letters have reverse order in adjacent blocks. Note that we can delete at most one letter from each block to avoid having any adjacent same letters where blocks meet, so in this case we have $\lambda_{s}(n;n) \geq ns - n = \Omega(n^{2-o(1)})$.

If $s \neq \Omega(n^{1-o(1)})$, then there exists a constant $\alpha < 1$ and an infinite sequence of positive integers $i_1 < i_2 <\dots$ such that $s(i_j) < i_{j}^{\alpha}$ for each $j > 0$. Thus, it suffices to show that for every $0 < \alpha < 1$, there exists a constant $\beta < 2$ such that $\lambda_{n^{\alpha}}(n; n) = O(n^{\beta})$. 

For 0-1 matrices, we say that $A$ \emph{contains} $P$ if $A$ has a submatrix that is equal to $P$ or that can be turned into $P$ by changing some ones to zeroes. Otherwise $A$ \emph{avoids} $P$. Let $ex(n, m, P)$ denote the maximum number of ones in an $n \times m$ 0-1 matrix that avoids $P$, and let $ex(n, P) = ex(n, n, P)$. The Zarankiewicz problem is to find $ex(n, m, R_{a, b})$ for all $a, b$, where $R_{a, b}$ denotes the $a \times b$ matrix of all ones. The best known upper bounds are $ex(n, m, R_{a, b}) \leq (b-1)^{1/a}(n-a+1)m^{1-1/a}+(a-1)m$ \cite{kst}. 

\begin{thm}\label{mainth2}
$\lambda_{n^{\alpha}}(n; n) = O(n^{\frac{3}{2}+\frac{\alpha}{2}})$ for $0 < \alpha < 1$.
\end{thm}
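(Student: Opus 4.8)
The plan is to reduce the problem to the Zarankiewicz-type bound on forbidden all-ones matrices stated just above. A $DS(n,s)$-sequence partitioned into $n$ blocks corresponds naturally to an $n \times n$ 0-1 matrix $A$: let $A_{i,\ell} = 1$ if letter $i$ appears in block $\ell$. The length of the sequence is exactly the number of ones in $A$ (since each block has distinct letters), so it suffices to bound this quantity. The key observation is that forbidding the alternation $abab\cdots$ of length $s+2 = n^{\alpha}+2$ forces $A$ to avoid a large all-ones submatrix: if $A$ contained $R_{2,b}$ with $b$ large, we would have two letters $a,b$ each appearing in $b$ common blocks, and — using that adjacent blocks can be oriented so the two letters alternate in consecutive blocks — this would produce an alternation of length roughly $b$, contradicting the order bound once $b \geq s+1 = n^{\alpha}+1$. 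So $A$ avoids $R_{2, n^{\alpha}+1}$ (up to a constant; I will track the exact threshold carefully).

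Next I would apply the Kővári–Sós–Turán bound from the excerpt, $ex(n,m,R_{a,b}) \leq (b-1)^{1/a}(n-a+1)m^{1-1/a} + (a-1)m$, with $a = 2$, $b = \Theta(n^{\alpha})$, and $m = n$. This gives $ex(n, R_{2, \Theta(n^{\alpha})}) = O\bigl((n^{\alpha})^{1/2} \cdot n \cdot n^{1/2}\bigr) = O(n^{3/2 + \alpha/2})$, which is exactly the claimed bound. The lower-order term $(a-1)m = n$ is absorbed. Thus $\lambda_{n^{\alpha}}(n;n) \leq ex(n, R_{2, O(n^{\alpha})}) = O(n^{3/2+\alpha/2})$.

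The one step requiring care — and the main obstacle — is the reduction from "$A$ contains $R_{2,b}$" to "the sequence contains a long alternation." Having two rows with $b$ common columns only immediately yields a subsequence of the form $\cdots a \cdots b \cdots a \cdots b \cdots$ where within a single block the relative order of $a$ and $b$ is fixed but may not alternate block-to-block. However, since the sequence avoids adjacent equal letters is not automatically guaranteed across blocks in an arbitrary block partition, I need to be slightly more careful: restrict attention to the two letters $a,b$ and the $b$ blocks they share. Within each such block the pair contributes $ab$ or $ba$; a maximal run of blocks all contributing $ab$ (or all $ba$) contributes only $2$ to the alternation length, but each "switch" between $ab$-blocks and $ba$-blocks adds to the alternation. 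In the worst case the orientations are all equal and we only get alternation length $4$, which is too weak. The fix is the standard one: instead of demanding $R_{2,b}$ literally, observe that the sequence restricted to $\{a,b\}$ across all $n$ blocks, after deleting immediately-repeated letters, is itself an alternation, and its length is within a factor $2$ of the number of blocks containing both $a$ and $b$ plus the number containing exactly one; more simply, the total number of ones in rows $a$ and $b$ of $A$ is at most (alternation-avoidance length) $= s+1 = n^{\alpha}+1$ after the single-deletion-per-block adjustment, so no two rows of $A$ have more than $O(n^{\alpha})$ ones combined, which in particular bounds the common columns by $O(n^{\alpha})$ and hence forbids $R_{2, c n^{\alpha}}$ for a suitable constant $c$. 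With that pairwise bound in hand, the Kővári–Sós–Turán estimate finishes the proof; I expect the remaining arithmetic to be routine.
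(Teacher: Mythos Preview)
Your overall strategy---encode the sequence as an $n\times n$ 0-1 matrix, show it avoids $R_{2,s+1}$, then apply K\H{o}v\'{a}ri--S\'{o}s--Tur\'{a}n---is exactly the paper's argument. But your analysis of the key reduction is inverted. You write that a run of blocks all oriented $ab$ contributes only $2$ to the alternation and that switches help; in fact a run of $k$ blocks all oriented $ab$ yields $(ab)^k$, an alternation of length $2k$, whereas a switch $ab\mid ba$ collapses to $aba$ and adds only $1$. The worst case for alternation length is when orientations \emph{alternate}, not when they agree. Your proposed ``fix'' is also false: the total number of ones in two rows of $A$ is not bounded by $s+1$ (take $a$ in blocks $1,\ldots,n/2$ and $b$ in blocks $n/2+1,\ldots,n$: alternation length $2$, total ones $n$).

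The correct argument is the one the paper invokes (via the inequality $\lambda_s(n;m)\le\lambda'_s(n;m)$): if $a$ and $b$ both occur in $k$ common blocks, restricting the sequence to $\{a,b\}$ on those blocks gives a word of length $2k$; deleting adjacent repeats removes at most one letter per block boundary, leaving an alternation of length at least $2k-(k-1)=k+1$. Since the sequence is $DS(n,s)$ we get $k+1\le s+1$, i.e.\ $k\le s$, so $A$ avoids $R_{2,s+1}$. From there your application of K\H{o}v\'{a}ri--S\'{o}s--Tur\'{a}n with $a=2$, $b=s+1=n^{\alpha}+1$ is correct and gives $O(n^{3/2+\alpha/2})$.
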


\begin{proof}
In order to prove the theorem, we bound a different extremal function. Define $\lambda'_{s}(n; m)$ to be the maximum possible length of a sequence with $n$ distinct letters that can be partitioned into $m$ blocks and which has no pair of letters that occur in $s+1$ blocks together. The sequence in the definition of $\lambda'_{s}(n; m)$ is allowed to have adjacent same letters where blocks meet. Note that clearly $\lambda_{s}(n; m) \leq\lambda'_{s}(n; m)$, so to prove Theorem \ref{mainth2}, it suffices to prove that $\lambda'_{n^{\alpha}}(n; n) = O(n^{\frac{3}{2}+\frac{\alpha}{2}})$ for $0 < \alpha < 1$. However, $\lambda'_{s}(n; n) = ex(n, R_{2, s+1})$, so the theorem follows from the upper bounds for $ex(n, R_{a, b})$ \cite{kst}.
\end{proof}

Thus we have the answer to the question of Wellman and Pettie.

\begin{cor}
$\lambda_{s}(n;n) = \Omega(n^{2-o(1)})$ if and only if $s(n) = \Omega(n^{1-o(1)})$.
\end{cor}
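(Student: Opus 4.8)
The plan is to read off the corollary from the two halves already established in this section; the remaining work is essentially bookkeeping with the $\Omega(\cdot)$ and $o(1)$ quantifiers. First I would record the reformulations that place both sides of the equivalence on a common footing: since a sequence on $n$ letters partitioned into $n$ blocks has length at most $n^{2}$, the statement $\lambda_{s}(n;n)=\Omega(n^{2-o(1)})$ is equivalent to $\liminf_{n\to\infty}\frac{\log\lambda_{s}(n;n)}{\log n}\ge 2$, and likewise $s(n)=\Omega(n^{1-o(1)})$ is equivalent to $\liminf_{n\to\infty}\frac{\log s(n)}{\log n}\ge 1$. With these in hand, each direction reduces to a short comparison of exponents.

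For the ``if'' direction it suffices to cite the explicit construction given at the start of this section, which shows $\lambda_{s}(n;n)\ge ns-n$: use $\min(s,n)$ blocks each containing all $n$ letters with the order reversed between adjacent nonempty blocks, follow them by empty blocks, and delete at most one letter at each internal boundary. If $s(n)=\Omega(n^{1-o(1)})$ then $ns-n=\Omega(n^{2-o(1)})$, so $\lambda_{s}(n;n)=\Omega(n^{2-o(1)})$.

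For the ``only if'' direction I would argue by contraposition. If $s(n)$ is not $\Omega(n^{1-o(1)})$ then, as observed just before Theorem~\ref{mainth2}, there is a constant $0<\alpha<1$ and an infinite increasing sequence of positive integers $i_{1}<i_{2}<\cdots$ with $s(i_{j})<i_{j}^{\alpha}$ for every $j$. Now $\lambda_{s}(n;n)$ is nondecreasing in $s$, since an alternation of length $s+3$ contains one of length $s+2$, so a sequence avoiding alternations of length $s+2$ also avoids alternations of length $s+3$. Hence, applying Theorem~\ref{mainth2},
\[
\lambda_{s(i_{j})}(i_{j};i_{j})\ \le\ \lambda_{\lceil i_{j}^{\alpha}\rceil}(i_{j};i_{j})\ =\ O\!\left(i_{j}^{\frac{3}{2}+\frac{\alpha}{2}}\right).
\]
Therefore $\limsup_{j\to\infty}\frac{\log\lambda_{s(i_{j})}(i_{j};i_{j})}{\log i_{j}}\le\frac{3}{2}+\frac{\alpha}{2}<2$, whence $\liminf_{n\to\infty}\frac{\log\lambda_{s}(n;n)}{\log n}<2$, which by the reformulation above says precisely that $\lambda_{s}(n;n)\ne\Omega(n^{2-o(1)})$. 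Combining the two directions finishes the proof.

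I do not expect a genuine mathematical obstacle: all the content sits in Theorem~\ref{mainth2} (through the identity $\lambda'_{s}(n;n)=ex(n,R_{2,s+1})$ and the Zarankiewicz upper bound from \cite{kst}) and in the block construction, with the corollary serving only as the logical wrapper. The only points that need care are the precise meaning of $\Omega(n^{2-o(1)})$ and the passage to a subsequence in its negation, together with the harmless rounding of $n^{\alpha}$ to an integer when invoking Theorem~\ref{mainth2}; these are bookkeeping rather than real difficulties.
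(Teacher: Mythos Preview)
Your proposal is correct and follows exactly the paper's approach: the paper proves the ``if'' direction by the same block construction yielding $\lambda_{s}(n;n)\ge n\min(s,n)-n$, and the ``only if'' direction by the same contrapositive reduction to Theorem~\ref{mainth2}, with the corollary stated as an immediate consequence. The only difference is that you spell out the bookkeeping (the $\liminf$ reformulation of $\Omega(n^{2-o(1)})$, monotonicity of $\lambda_{s}$ in $s$, rounding of $n^{\alpha}$) that the paper leaves implicit.
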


The same proof implies the next result for 0-1 matrices.

\begin{thm}
$ex(n, R_{2, s}) =  \Omega(n^{2-o(1)})$ if and only if $s(n) = \Omega(n^{1-o(1)})$.
\end{thm}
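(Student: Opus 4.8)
The plan is to reduce the statement $ex(n, R_{2,s}) = \Omega(n^{2-o(1)}) \iff s(n) = \Omega(n^{1-o(1)})$ to the already-established results about $\lambda'_{s}(n;n)$, exploiting the exact identity $\lambda'_{s}(n;n) = ex(n, R_{2,s+1})$ that appeared in the proof of Theorem \ref{mainth2}. First I would recall that for a sequence partitioned into $n$ blocks, recording for each letter the set of blocks in which it occurs gives a 0-1 matrix with rows indexed by letters and columns indexed by blocks; the condition that no two letters share $s+1$ common blocks is exactly the condition that this matrix avoids $R_{2,s+1}$, and conversely any $R_{2,s+1}$-avoiding $n\times n$ matrix can be read off as such a sequence whose length equals the number of ones. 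Hence $\lambda'_{s}(n;n) = ex(n, R_{2,s+1})$, so (after the harmless index shift $s \mapsto s-1$) it suffices to prove the corollary's two directions for $\lambda'$.

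For the forward implication, suppose $ex(n, R_{2,s}) = \Omega(n^{2-o(1)})$; I want $s(n) = \Omega(n^{1-o(1)})$. I would argue the contrapositive exactly as in the opening paragraphs of Section \ref{bifur}: if $s \neq \Omega(n^{1-o(1)})$ then there is a constant $\alpha < 1$ and an infinite increasing sequence $i_1 < i_2 < \cdots$ with $s(i_j) < i_j^{\alpha}$. Applying the Kővári–Sós–Turán bound $ex(n, R_{2,b}) \leq (b-1)^{1/2}(n-1)n^{1/2} + n$ (the $a=2$ case of the stated bound in \cite{kst}) along this subsequence with $b = s(i_j) < i_j^{\alpha}$ gives $ex(i_j, R_{2,s(i_j)}) = O(i_j^{3/2 + \alpha/2})$, and since $3/2 + \alpha/2 < 2$ this is $O(n^{\beta})$ for a fixed $\beta < 2$ along the subsequence, contradicting $ex(n, R_{2,s}) = \Omega(n^{2-o(1)})$.

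For the reverse implication, suppose $s(n) = \Omega(n^{1-o(1)})$. I would mimic the explicit construction from the first paragraph of Section \ref{bifur}: build an $n \times n$ 0-1 matrix whose first $\min(s,n)$ columns are all ones and whose remaining columns are all zeros; any two rows then share exactly $\min(s,n)$ common one-columns, so the matrix avoids $R_{2,s+1}$ (and in particular contains no $R_{2,t}$ for $t > \min(s,n)$). This matrix has $n \cdot \min(s,n) = \Omega(n \cdot n^{1-o(1)}) = \Omega(n^{2-o(1)})$ ones, so $ex(n, R_{2,s}) = \Omega(n^{2-o(1)})$ after the index shift. (The subtraction of one letter per block needed in the sequence version to kill adjacent equal letters has no analogue here, so this direction is even cleaner for matrices.)

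I do not expect any serious obstacle: the entire content is the translation between block-partitioned sequences and 0-1 matrices together with an appeal to the KST upper bound, both of which are already invoked in the text. The only point requiring mild care is bookkeeping with the off-by-one in the number of shared blocks ($R_{2,s+1}$ versus $R_{2,s}$) and checking that the $o(1)$ quantifiers are handled uniformly along the extracted subsequence $(i_j)$ rather than for all $n$; this is exactly the reduction already performed in the paragraph preceding Theorem \ref{mainth2}, so I would simply cite that reduction and the identity $\lambda'_{s}(n;n) = ex(n, R_{2,s+1})$ and note that "the same proof" indeed goes through verbatim.
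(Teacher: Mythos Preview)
Your proposal is correct and follows exactly the route the paper intends by ``the same proof'': the KST bound $ex(n,R_{2,s}) = O(n^{3/2+\alpha/2})$ along a subsequence with $s(i_j)<i_j^{\alpha}$ handles one direction, and the all-ones-in-$\min(s,n)$-columns matrix (the 0-1 translation of the block construction preceding Theorem~\ref{mainth2}) handles the other. Your bookkeeping on the $R_{2,s}$ versus $R_{2,s+1}$ shift is fine, and as you note the matrix version is actually cleaner since no adjacent-letter deletions are needed.
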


The next corollary follows since $R_{2,s}$ contains every $2 \times s$ 0-1 matrix.

\begin{cor}
If $P_s$ is any $2 \times s$ 0-1 matrix that has ones in both the first and last columns, then $ex(n, P_{s}) =  \Omega(n^{2-o(1)})$ if and only if $s(n) = \Omega(n^{1-o(1)})$.
\end{cor}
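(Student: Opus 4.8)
The plan is to prove the two implications separately: the \emph{only if} direction will come for free from the previous theorem together with the containment $R_{2,s} \supseteq P_s$, while the \emph{if} direction will follow from a one-line explicit construction.

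For the \emph{only if} direction, I would use that, since $P_s$ is a $2 \times s$ $0$-$1$ matrix, it is obtained from $R_{2,s}$ by changing some ones to zeroes; hence every matrix containing $R_{2,s}$ also contains $P_s$, so every matrix avoiding $P_s$ avoids $R_{2,s}$, giving $ex(n, P_s) \le ex(n, R_{2,s})$ for all $n$. Therefore $ex(n, P_s) = \Omega(n^{2-o(1)})$ forces $ex(n, R_{2,s}) = \Omega(n^{2-o(1)})$, and the previous theorem then yields $s(n) = \Omega(n^{1-o(1)})$.

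For the \emph{if} direction, assume $s(n) = \Omega(n^{1-o(1)})$, so in particular $s(n) \to \infty$ and I may take $s = s(n) \ge 2$. Set $k = \min(s-1, n)$ and let $A$ be the $n \times n$ $0$-$1$ matrix whose ones are exactly the entries in its first $k$ columns. To see that $A$ avoids $P_s$: in any candidate copy of $P_s$ occupying rows $r_1 < r_2$ and columns $c_1 < c_2 < \dots < c_s$ of $A$, the hypothesis that $P_s$ has a one in its last column forces one of $A[r_1, c_s], A[r_2, c_s]$ to equal $1$, hence $c_s \le k \le s-1$; but $c_1 < \dots < c_s$ are distinct positive integers, so $c_s \ge s$, a contradiction. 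Thus $A$ avoids $P_s$, and $A$ has $n k = n \min(s-1,n)$ ones. A routine check (splitting on whether $s-1 \le n$) shows $\min(s-1,n) = n^{1-o(1)}$ under the hypothesis, so $ex(n, P_s) \ge n \min(s-1,n) = \Omega(n^{2-o(1)})$; combined with the first part this finishes the proof.

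I do not expect a genuine obstacle: the statement is essentially a repackaging of the previous theorem plus the monotonicity $ex(n, P_s) \le ex(n, R_{2,s})$, and the construction is the only piece that is not automatic. The one point worth being careful about is pinning down exactly where the assumption on $P_s$ is used — it enters only through the fact that $P_s$ has a one in its last column (which, by the left–right symmetry of $0$-$1$ matrix containment, is interchangeable with having a one in its first column), and this is precisely what allows the construction to use $s-1$ rather than $s$ all-ones columns and thereby avoid $P_s$.
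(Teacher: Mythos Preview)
Your proposal is correct and matches the paper's approach. The paper's proof is the single sentence ``follows since $R_{2,s}$ contains every $2 \times s$ 0-1 matrix,'' which is exactly your monotonicity $ex(n,P_s)\le ex(n,R_{2,s})$ for the \emph{only if} direction; for the \emph{if} direction the paper tacitly reuses the earlier lower-bound construction, and your explicit $n\times n$ matrix with $\min(s-1,n)$ all-ones columns is precisely that construction spelled out.
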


We can also get more general corollaries about 0-1 matrices and sequences using the upper bounds for the Zarankiewicz problem, specifically that $ex(n, R_{t, n^{\alpha}}) = O(n^{2+\frac{\alpha-1}{t}})$.

\begin{cor}
If $P_s$ is any 0-1 matrix with $s$ columns and at most $r$ rows for some constant $r$, and $P_s$ has ones in both the first and last columns, then $ex(n, P_{s}) =  \Omega(n^{2-o(1)})$ if and only if $s(n) = \Omega(n^{1-o(1)})$.
\end{cor}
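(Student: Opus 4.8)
The plan is to reduce the matrix statement to the already-established threshold for $ex(n, R_{2,s})$ together with the Zarankiewicz upper bound $ex(n, R_{t, n^{\alpha}}) = O(n^{2+\frac{\alpha-1}{t}})$ quoted just before the corollary. The forward direction (``$s(n) = \Omega(n^{1-o(1)})$ implies $ex(n, P_s) = \Omega(n^{2-o(1)})$'') is the easy half and does not even use the row bound: since $P_s$ has ones in its first and last columns, $P_s$ is contained in $R_{2,s}$ (take the two rows of $R_{2,s}$ and the appropriate $s$ columns — actually one needs $P_s \subseteq R_{r,s} \subseteq$ a suitable all-ones matrix, but the cleanest route is to observe that any matrix avoiding $P_s$ has at most as many ones as one avoiding $R_{r,s}$ only after we go the other way, so instead I would argue directly). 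Concretely, the lower-bound construction for $ex(n, R_{2,s})$ from the previous theorem already avoids $R_{2,s}$, hence avoids $R_{r,s}$, hence avoids $P_s$ since $P_s$ is a submatrix of $R_{r,s}$; that gives $ex(n, P_s) \geq ex(n, R_{r,s}) = \Omega(n^{2-o(1)})$ when $s(n) = \Omega(n^{1-o(1)})$. So the forward implication is immediate from the constructions already in hand.

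For the reverse direction, suppose $s(n) \neq \Omega(n^{1-o(1)})$; then as in Section~\ref{bifur} there is a constant $\alpha < 1$ and an infinite increasing sequence $i_1 < i_2 < \cdots$ with $s(i_j) < i_j^{\alpha}$, so it suffices to show that for every fixed $0 < \alpha < 1$ there is a $\beta < 2$ with $ex(n, P_{n^{\alpha}}) = O(n^{\beta})$ along that subsequence. Here is where the bounded number of rows is used: since $P_{n^{\alpha}}$ has at most $r$ rows and has ones in its first and last columns, $P_{n^{\alpha}}$ is contained in $R_{r, n^{\alpha}}$ (fill in all entries to get the all-ones $r \times n^{\alpha}$ matrix). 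Therefore any matrix avoiding $R_{r, n^{\alpha}}$ also avoids $P_{n^{\alpha}}$ — wait, that inclusion goes the wrong way for an upper bound. The correct statement is the contrapositive at the level of containment: if $A$ contains $P_{n^{\alpha}}$ then, since $P_{n^{\alpha}} \subseteq R_{r,n^{\alpha}}$ as a pattern means $R_{r,n^\alpha}$ contains $P_{n^\alpha}$, we instead want $A$ avoiding $P_{n^\alpha}$ to force few ones, which follows if every $R_{r,n^\alpha}$-free matrix is automatically $P_{n^\alpha}$-free — false in general. The right move is: $ex(n, P_{n^{\alpha}}) \leq ex(n, R_{r, n^{\alpha}})$ because any matrix containing $R_{r,n^\alpha}$ contains $P_{n^\alpha}$ (as $P_{n^\alpha}$ is obtained from $R_{r,n^\alpha}$ by deleting rows/columns and zeroing entries), so a $P_{n^\alpha}$-free matrix is $R_{r,n^\alpha}$-free... again the containment direction needs care. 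The clean fact I will actually invoke: if pattern $Q$ contains pattern $P$, then $ex(n, P) \leq ex(n, Q)$; here $R_{r,n^\alpha}$ contains $P_{n^\alpha}$, giving $ex(n, P_{n^\alpha}) \leq ex(n, R_{r, n^{\alpha}})$.

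Applying the Zarankiewicz upper bound with $t = r$ then yields $ex(n, P_{n^{\alpha}}) \leq ex(n, R_{r, n^{\alpha}}) = O\!\left(n^{2 + \frac{\alpha - 1}{r}}\right)$, and since $\alpha < 1$ and $r$ is a constant we have $\beta := 2 + \frac{\alpha-1}{r} < 2$, which is exactly what was needed. Combining the two directions gives the equivalence. The main point requiring attention — and the only genuinely non-routine step — is getting the pattern-containment monotonicity $ex(n,P) \le ex(n,Q)$ oriented correctly (it follows directly from the definition: deleting rows and columns and then turning ones into zeroes are exactly the operations allowed in ``$A$ contains $P$''), and checking that ``ones in the first and last columns'' is enough to guarantee $R_{r,s}$ contains $P_s$ with the full column span preserved so that the parameter $s$ is not lost; everything else is a direct citation of the Zarankiewicz bound and of the already-proved $R_{2,s}$ threshold.
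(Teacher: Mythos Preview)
Your reverse direction is correct and is exactly the paper's intended argument: since $R_{r,s}$ contains $P_s$, one has $ex(n,P_s)\le ex(n,R_{r,s})$, and the K\H{o}v\'ari--S\'os--Tur\'an bound $ex(n,R_{r,n^{\alpha}})=O(n^{2+(\alpha-1)/r})$ with $\alpha<1$ gives an exponent strictly below $2$.

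Your forward direction, however, contains a genuine error. You argue that the $R_{2,s}$-avoiding construction ``hence avoids $R_{r,s}$, hence avoids $P_s$ since $P_s$ is a submatrix of $R_{r,s}$,'' and conclude $ex(n,P_s)\ge ex(n,R_{r,s})$. The second implication is false: avoiding a pattern never implies avoiding its subpatterns (you correctly sort this out later for the upper bound, but here you commit to the wrong direction). Likewise the inequality $ex(n,P_s)\ge ex(n,R_{r,s})$ is backwards; containment $P_s\subseteq R_{r,s}$ gives $ex(n,P_s)\le ex(n,R_{r,s})$, which is useless for a lower bound on $ex(n,P_s)$. Concretely, an $n\times n$ matrix with $s-1$ all-ones columns placed, say, at positions $1$ and $n$ (and elsewhere) avoids $R_{2,s}$ and $R_{r,s}$, yet can easily contain a $P_s$ whose middle columns are all zero.

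The fix is to use the hypothesis ``ones in the first and last columns'' directly in the construction rather than via monotonicity. Take the $n\times n$ matrix $A$ with all ones in columns $1,\dots,\min(s-1,n)$ and zeros elsewhere. For any choice of $s$ columns $c_1<\cdots<c_s$ in $A$ one has $c_s\ge s$, so column $c_s$ of $A$ is all zero; but $P_s$ has a one in its last column, so $A$ cannot contain $P_s$. This gives $ex(n,P_s)\ge \min(s-1,n)\cdot n=\Omega(n^{2-o(1)})$ when $s=\Omega(n^{1-o(1)})$, which is the intended easy half.
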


For any sequence $u$, define $Ex(u, n; m)$ to be the maximum possible length of a sequence on $n$ distinct letters and $m$ blocks that has no subsequence isomorphic to $u$.

\begin{cor}
If $u$ is any sequence of length $s$ with at most $r$ distinct letters for some constant $r$, then $Ex(u, n; n) = \Omega(n^{2-o(1)})$ if and only if $s(n) = \Omega(n^{1-o(1)})$.
\end{cor}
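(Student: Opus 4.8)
The plan is to prove both directions by reducing to facts already established for the Zarankiewicz matrix $R_{r,s}$, using the block-incidence matrix of a sequence.

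For the ``only if'' direction I would argue by contrapositive, exactly along the lines of Theorem~\ref{mainth2}: if $s(n)\neq\Omega(n^{1-o(1)})$ there is a constant $\alpha<1$ and integers $i_1<i_2<\cdots$ with $s(i_j)<i_j^{\alpha}$, so it suffices to find, for each $0<\alpha<1$, a constant $\beta<2$ with $Ex(u,n;n)=O(n^{\beta})$ whenever $u$ has length $s<n^{\alpha}$ and at most $r$ distinct letters. Given such a $u$ and a sequence $T$ on $n$ letters partitioned into $n$ blocks that avoids $u$, form the $n\times n$ $0$-$1$ matrix $M$ with $M_{\ell,b}=1$ iff letter $\ell$ occurs in block $b$; since the letters in each block are distinct, the number of ones of $M$ equals $|T|$. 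The key point is that if some $r$ letters of $T$ all occur in $s$ common blocks, then restricting $T$ to those $r$ letters and those $s$ blocks exhibits an $(r,s)$-formation as a subsequence of $T$, and every $(r,s)$-formation contains every sequence of length $s$ with at most $r$ distinct letters (after relabeling the alphabet of $u$ into the $r$ letters, take the $i$-th symbol of $u$ from the $i$-th permutation), so $T$ would contain $u$. Hence $M$ avoids $R_{r,s}$, and therefore $|T|\le ex(n,R_{r,s})\le ex(n,R_{r,\lceil n^{\alpha}\rceil})=O(n^{2+(\alpha-1)/r})$, so $\beta=2+(\alpha-1)/r<2$ works.

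For the ``if'' direction I would give a direct construction using multiplicities rather than a formation-avoidance argument. Since $u$ has length $s$ and at most $r$ distinct letters, some letter of $u$ occurs at least $\lceil s/r\rceil$ times, so any sequence in which every letter occurs at most $\lceil s/r\rceil-1$ times avoids $u$. Let $T$ consist of $\min(\lceil s/r\rceil-1,\,n)$ blocks, each an arbitrary permutation of all $n$ letters, followed by enough empty blocks to total $n$ blocks. Then $T$ is a sequence on $n$ letters partitioned into $n$ blocks, every letter occurs fewer than $\lceil s/r\rceil$ times so $T$ avoids $u$, and $|T|=n\cdot\min(\lceil s/r\rceil-1,n)$, which is $\Omega(n^{2-o(1)})$ when $s=\Omega(n^{1-o(1)})$ and $r$ is constant.

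The main obstacle is getting the reduction in the ``only if'' direction right: one must recognize that the correct encoding of $T$ is its block-incidence matrix (so length becomes number of ones), and that avoiding the short word $u$ already forces $M$ to avoid $R_{r,s}$ rather than some larger matrix --- this rests on the fact that an $(r,s)$-formation, although much longer than $u$, contains every length-$s$ word on $r$ symbols, so $r$ letters occurring in $s$ common blocks already produce a copy of $u$. One must take care not to confuse this implication with its (false) converse. Once the reduction is in place, the bound is just an application of the Zarankiewicz estimate $ex(n,R_{t,n^{\alpha}})=O(n^{2+(\alpha-1)/t})$ used above, in direct parallel with the preceding corollary on forbidden $0$-$1$ matrices $P_s$ with at most $r$ rows.
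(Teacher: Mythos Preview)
Your proposal is correct and is exactly the argument the paper intends: the corollary is stated without proof because it follows from the same block-incidence-matrix reduction to $ex(n,R_{r,s})$ used in Theorem~\ref{mainth2} together with the Zarankiewicz bound $ex(n,R_{t,n^{\alpha}})=O(n^{2+(\alpha-1)/t})$ quoted just before the corollary. Your ``if'' direction via the pigeonhole multiplicity count is the natural generalization of the paper's explicit construction for alternations and works for the same reason.
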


\section*{Acknowledgement}

The author thanks the anonymous referee for many helpful comments and correcting errors in the paper.

\end{document}